
\documentclass[12pt]{article}%
\usepackage{amsmath}
\usepackage{amsfonts}
\usepackage{amssymb}
\usepackage{graphicx}
\usepackage{color}%
\setcounter{MaxMatrixCols}{30}
%TCIDATA{OutputFilter=latex2.dll}
%TCIDATA{Version=5.50.0.2890}
%TCIDATA{CSTFile=40 LaTeX article.cst}
%TCIDATA{Created=Monday, October 15, 2012 10:05:29}
%TCIDATA{LastRevised=Monday, May 27, 2013 00:31:53}
%TCIDATA{<META NAME="GraphicsSave" CONTENT="32">}
%TCIDATA{<META NAME="SaveForMode" CONTENT="1">}
%TCIDATA{BibliographyScheme=Manual}
%TCIDATA{<META NAME="DocumentShell" CONTENT="Standard LaTeX\Blank - Standard LaTeX Article">}
%TCIDATA{Language=American English}
%BeginMSIPreambleData
\providecommand{\U}[1]{\protect\rule{.1in}{.1in}}
%EndMSIPreambleData
\newtheorem{theorem}{Theorem}

\newtheorem{lemma}[theorem]{Lemma}

\newenvironment{proof}[1][Proof]{\noindent\textbf{#1.} }{\ \rule{0.5em}{0.5em}}
\begin{document}

\title{Some existence results to the Dirichlet problem for the minimal hypersurface
equation on non mean convex domains of a Riemannian manifold}
\author{Ar\'{\i} Aiolfi
\and Jaime Ripoll
\and Marc Soret}
\date{}
\maketitle

\section{Introduction}

\qquad As it is well known, the Dirichlet problem\ for the minimal
hypersurface equation
\begin{equation}
\left\{
\begin{array}
[c]{l}%
\mathcal{M}\left[  u\right]  :=\operatorname{div}\dfrac{\operatorname{grad}%
u}{\sqrt{1+\left\vert \operatorname{grad}u\right\vert ^{2}}}=0\text{ in
}\Omega,\text{ }u\in C^{2,\alpha}\left(  \overline{\Omega}\right) \\
u|_{\partial\Omega}=\varphi,
\end{array}
\right.  \label{Dir}%
\end{equation}
in a bounded domain $\Omega~\subset~\mathbb{R}^{n}$ is solvable for an
arbitrary continuous boundary data $\varphi$ only if the domain is
\textit{mean convex }(Theorem 1 of \cite{JS}). This result (the existence
part)\ has been extended and generalized to Riemannian manifolds (more
generally to constant mean curvature graphs in warped products) in \cite{DHL}.

In the paper \cite{JS} H. Jenkins and J. Serrin noted that a condition
involving $\operatorname*{osc}\left(  \varphi\right)  :=\sup\varphi
-\inf\varphi,$ $\left\vert D\varphi\right\vert $ and $\left\vert D^{2}%
\varphi\right\vert $ should be enough to ensure the solvability of (\ref{Dir})
in arbitrary bounded domains$.$ In fact, they proved that if
$\operatorname*{osc}\left(  \varphi\right)  \leq\mathcal{B}\left(  \left\vert
D\varphi\right\vert ,\left\vert D^{2}\varphi\right\vert ,\Omega\right)  $ then
the (\ref{Dir}) is solvable (Theorem 2 of \cite{JS}). The function
$\mathcal{B}$ has an explicit form (Section 3, p. 179 of \cite{JS} ) and is
infinity at the points where the domain is mean convex. Theorem 2 of \cite{JS}
then extends Theorem 1 of \cite{JS}.

In the present paper we first obtain an extension of Theorem 2 of
Jenkins-Serrin \cite{JS} to the minimal hypersurface PDE\ on a domain $\Omega$
in an arbitrary complete Riemannian manifold $M$. In the next result
$\operatorname{grad}$ and $\operatorname{div}$ are the gradient and divergence
operators in $M.$ Then, $u$ is a solution of (\ref{Dir}) if and only if the
graph of $u$ in $M\times\mathbb{R}$ is a minimal surface. We prove

\begin{theorem}
\label{JS}Let $M^{n}$ be a complete $n-$dimensional Riemannian manifold $M$,
$n\geq2.$ Given a bounded domain $\Omega$ in $M$ - whose boundary
$\partial\Omega$ has mean curvature $H$ w.r.t. the inward unit normal vector -
let $\varphi\in C^{2,\alpha}\left(  \partial\Omega\right)  $ be such that
$\operatorname*{osc}\left(  \varphi\right)  \leq\mathcal{C}\left(  \left\vert
D\varphi\right\vert ,\left\vert D^{2}\varphi\right\vert ,|A|
,\operatorname*{Ric}_{M}\right)  $. Then the Dirichlet problem (\ref{Dir}) for
the minimal hypersurface equation is solvable. Moreover, $\mathcal{C}$ is
given explicitly by (\ref{ourC}) and (\ref{ob})\ and $\mathcal{C}=+\infty$ at
the mean convex points of $\partial\Omega$. It follows that if $\partial
\Omega$ is mean convex (that is, $H\geq0)$ then (\ref{Dir}) is solvable for
any continuous boundary data.
\end{theorem}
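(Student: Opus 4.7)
The plan is to apply the continuity method. Consider the one-parameter family $\mathcal{M}[u_t]=0$ in $\Omega$, $u_t|_{\partial\Omega}=t\varphi$, $t\in[0,1]$. The set $I\subset[0,1]$ of parameters for which this admits a $C^{2,\alpha}$ solution is nonempty ($u_0\equiv 0$) and is open by the implicit function theorem applied to the linearization of $\mathcal{M}$ at a solution (which is uniformly elliptic on each $C^1$ ball). Closedness reduces, via Ladyzhenskaya--Ural'tseva and Schauder bootstrap, to obtaining a priori $C^1(\overline\Omega)$ estimates for $u_t$ depending only on $\varphi$, $\Omega$ and the geometry of $M$.

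The $C^0$ bound $\|u_t\|_\infty\le \|\varphi\|_\infty$ is immediate from the maximum principle for $\mathcal{M}$. The interior gradient estimate for minimal graphs over a Riemannian base, depending only on $\|u_t\|_\infty$, distance to $\partial\Omega$, and a lower bound for $\operatorname{Ric}_M$, is available from the Korevaar/Rosenberg--Schulze--Spruck circle of techniques (cf.\ \cite{DHL}). This reduces the problem to a \emph{boundary} gradient estimate.

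The crux, and the main obstacle, is the construction of upper and lower barriers on a tubular neighborhood $U_\delta=\{0\le d<\delta\}$ of $\partial\Omega$, where $d$ is the distance to $\partial\Omega$. Following the Jenkins--Serrin scheme, fix an extension $\tilde\varphi$ of $\varphi$ to $U_\delta$ with $|D\tilde\varphi|,|D^2\tilde\varphi|$ controlled by $|D\varphi|,|D^2\varphi|$, and seek a barrier of the form $w=\tilde\varphi+\psi(d)$ with $\psi:[0,\delta]\to[0,\infty)$ smooth, strictly concave, $\psi(0)=0$ and $\psi'(0^+)=+\infty$. A direct computation yields
\[
\mathcal{M}[w]=\frac{\psi''}{(1+(\psi')^2)^{3/2}}-\frac{H_d\,\psi'}{\sqrt{1+(\psi')^2}}+R(\tilde\varphi,\psi',d),
\]
where $H_d$ is the mean curvature of the level set $\{d=\mathrm{const}\}$ with respect to $-\operatorname{grad}d$, and $R$ collects lower-order terms controlled by $|D\tilde\varphi|,|D^2\tilde\varphi|$. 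By the Riccati equation for the Hessian of $d$, $H_d$ along normal geodesics is determined by $H$, $|A|^2$ and the Ricci curvatures in the radial directions, so $H_d$ is bounded on $U_\delta$ in terms of $H$, $|A|$ and $\operatorname{Ric}_M$. The requirement $\mathcal{M}[w]\le 0$ becomes an ODE differential inequality for $\psi$; integrating it gives an admissible $\psi$ precisely when its total height $\sup_{[0,\delta]}\psi$ is allowed to be at least $\operatorname{osc}(\varphi)$, which is exactly the hypothesis $\operatorname{osc}(\varphi)\le \mathcal{C}(|D\varphi|,|D^2\varphi|,|A|,\operatorname{Ric}_M)$ appearing in the statement, with the explicit form (\ref{ourC})--(\ref{ob}). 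Where $H\ge 0$ the ODE obstruction vanishes and $\psi$ of arbitrary height is admissible, giving $\mathcal{C}=+\infty$ at mean convex points and recovering the classical mean-convex theorem.

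With $w^+=\tilde\varphi+\psi(d)$ and its mirror $w^-=\tilde\varphi-\psi(d)$, the comparison principle for $\mathcal{M}$ gives $w^-\le u_t\le w^+$ on $U_\delta$, and since $w^\pm=\varphi$ on $\partial\Omega$ this produces a uniform bound on the normal derivative of $u_t$ along $\partial\Omega$. Combining with the interior gradient estimate yields the required $C^1(\overline\Omega)$ bound, closing the continuity argument and producing a $C^{2,\alpha}$ solution of (\ref{Dir}). The principal technical challenge is the barrier analysis above: one must track the Riccati evolution of $H_d$ carefully to isolate the dependence on $|A|$ and $\operatorname{Ric}_M$, and then integrate the resulting ODE inequality with sufficient sharpness to produce an explicit, and useful, function $\mathcal{C}$.
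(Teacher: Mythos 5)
Your overall strategy is the same as the paper's: a continuity method in the parameter $t$ of the boundary data $t\varphi$, with closedness reduced to a boundary gradient estimate obtained from Jenkins--Serrin type barriers $w^{\pm}=\tilde\varphi\pm\psi(d)$ on a tubular neighborhood of $\partial\Omega$, the interior gradient estimate being quoted from \cite{DHL}; the paper's Lemma \ref{Bar} carries out exactly the Riccati/second-fundamental-form bookkeeping you describe, with the explicit choice $\psi(t)=a\ln(1+bt)$ leading to (\ref{ob}) and (\ref{ourC}). Two points, however, need repair. First, you impose $\psi'(0^{+})=+\infty$ and then claim that $w^{-}\le u_{t}\le w^{+}$ with $w^{\pm}=\varphi$ on $\partial\Omega$ ``produces a uniform bound on the normal derivative of $u_{t}$ along $\partial\Omega$''. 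It does not: the bound extracted from that comparison is precisely $|\partial_{\eta}u_{t}|\le|D\tilde\varphi|+\psi'(0)$, which is vacuous if $\psi'(0^{+})=+\infty$. A barrier with infinite initial slope yields only a boundary modulus of continuity, not a gradient bound, and so does not close the continuity method as you have set it up. The fix is the one the paper makes: take $\psi$ concave and increasing with $\psi'(0)=ab$ finite (and large), and impose the height condition $\psi(\varepsilon)\ge\operatorname*{osc}(\varphi)$ at the inner edge of the tube so that $w^{+}\ge\sup\varphi\ge u_{t}$ there; this is exactly where the hypothesis $\operatorname*{osc}(\varphi)\le\mathcal{C}$ enters, with $\mathcal{C}=\frac{1}{b}\ln(1+b\varepsilon)$.

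Second, the last assertion of the theorem is solvability for arbitrary \emph{continuous} boundary data when $\partial\Omega$ is mean convex, and ``$\mathcal{C}=+\infty$ at mean convex points'' alone does not give this: your entire argument, and in particular the barrier construction, uses $|D\varphi|$ and $|D^{2}\varphi|$ and hence requires $\varphi\in C^{2,\alpha}$. The paper supplies the missing step by approximating $\varphi\in C^{0}(\partial\Omega)$ monotonically from above and below by $C^{2,\alpha}$ data $\varphi_{n}^{\pm}$, solving for each, and passing to the limit on compact subsets via the interior gradient estimate of Spruck \cite{S}, the boundary values being recovered from the squeeze $\varphi_{n}^{-}\le u_{n}^{-}\le u_{n}^{+}\le\varphi_{n}^{+}$ and the maximum principle. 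You should add this approximation argument (or an equivalent one) to cover the final claim.
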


Next we apply Theorem \ref{JS} to the exterior Dirichlet problem for the
minimal hypersurface equation when $M$ is complete and noncompact. This
problem consists in proving existence, uniqueness and describing the
asymptotic behavior of a solution of (\ref{Dir}) where $\Omega$ is an exterior
open subset of $M$, that is, $M\backslash\Omega$ is compact.

It seems that the first mathematician to take up with the exterior Dirichlet
problem was J. C. C. Nitsche, who proved (\cite{N}, \S 760) that any solution
$u$ of (\ref{Dir}), in the case $M=\mathbb{R}^{2}$, has at most linear growth
and has a well defined Gauss map at infinity. This problem has been
investigated further by R. Krust \cite{Kru}, E. Kuwert \cite{Kuw}, Kutev and
Tomi in \cite{KT} and, more recently in \cite{RT}.

An investigation of the exterior Dirichlet problem for the minimal surface
equation in the Riemannian setting was initiated in \cite{ER}. There the
authors considered only the special case of vanishing boundary data assuming,
in the case $K_{M}\geq0,$ a condition on the decay of the sectional curvature
of $M$ and, in the case that $K_{M}\leq-k^{2},$ $k>0,$ that $M\backslash
\Omega$ is convex and $M$ simply connected.

In this paper we continue the investigation of \cite{ER} in the case
$K_{M}\leq-k^{2},$ $k>0.$ First, we allow $\Omega$ to be any exterior domain
and the boundary data not need be zero. Moreover, the asymptotic behaviour of
the solution will be prescribed by any given continuous function defined on
the asymptotic boundary $\partial_{\infty}M$ of $M.$ For this last part we use
the ideas of \cite{RTe}, as follows.

Recall that if $M$ is a Hadamard manifold (complete, simply connected,
$K_{M}<0)$ the asymptotic boundary $\partial_{\infty}M$ and the cone
compactifaction $\overline{M}$ of $M$ are well defined by using the so called
cone topology (see \cite{Ch}). According to \cite{RTe}, we say that $M$
satisfies the \emph{strict convexity condition (SC condition)} if, given
$x\in\partial_{\infty}M$ and a relatively open subset $W\subset\partial
_{\infty}M$ containing $x,$ there exists a $C^{2}$ open subset $\Omega
\subset\overline{M}$ such that $x\in\operatorname*{Int}\left(  \partial
_{\infty}\Omega\right)  \subset W,$ where $\operatorname*{Int}\left(
\partial_{\infty}\Omega\right)  $ denotes the interior of $\partial_{\infty
}\Omega$ in $\partial_{\infty}M,$ and $M\setminus\Omega$ is convex. We prove

\begin{theorem}
\label{SC}Let $M$ be a Hadamard manifold satisfying the SC condition and
assume that $K_{M}\leq-k^{2},$ $k>0$. Let $\Omega$ be an exterior
$C^{2,\alpha}$ domain. Given $\varphi\in C^{2,\alpha}\left(  \partial
\Omega\right)  $ such that $\operatorname*{osc}\left(  \varphi\right)
\leq\mathcal{C}\left(  \left\vert D\varphi\right\vert ,\left\vert D^{2}%
\varphi\right\vert ,|A|, Ric_{M}\right)  $ and $\psi\in C^{0}\left(
\partial_{\infty}M\right)  $ there is an unique solution $u\in C^{2,\alpha
}\left(  \overline{\Omega}\right)  $ of $\mathcal{M}\left[  u\right]  =0$ in
$\Omega$ such that $u|_{\partial\Omega}=\varphi.$ Moreover, $u$ extends
continuously to $\partial_{\infty}M$ and $u|_{\partial_{\infty}M}=\psi.$
\end{theorem}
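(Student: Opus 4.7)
The plan is to reduce to the bounded case by exhausting $\Omega$ by the domains $\Omega_R:=\Omega\cap B_R(p)$, where $p\in M\setminus\Omega$ is fixed with $M\setminus\Omega\subset B_{R_0}(p)$. First, the Hessian comparison theorem under $K_M\leq -k^2$ gives that $\partial B_R(p)$ has mean curvature at least $(n-1)k\coth(kR)>0$ with respect to the inward unit normal to $\Omega_R$; hence $\mathcal{C}=+\infty$ on this face of $\partial\Omega_R$, so the only points at which the bound of Theorem \ref{JS} is effective on $\partial\Omega_R$ lie on $\partial\Omega$, where the assumed hypothesis on $\varphi$ supplies it. Fix a bounded continuous extension $\widetilde{\psi}\in C^0(\overline{M})$ of $\psi$. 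After a standard $C^{2,\alpha}$ smoothing of the corner $\partial\Omega\cap\partial B_R(p)$ with boundary data that interpolates $\varphi$ and $\widetilde{\psi}$ while preserving the requisite oscillation bound, Theorem \ref{JS} produces $u_R\in C^{2,\alpha}(\overline{\Omega_R})$ solving $\mathcal{M}[u_R]=0$ with these boundary values.

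Next comes a compactness step. The maximum principle gives $\|u_R\|_\infty\leq\max(\|\varphi\|_\infty,\|\widetilde{\psi}\|_\infty)$ uniformly in $R$; combined with the interior gradient and Schauder estimates for the minimal hypersurface equation on a Riemannian background, together with the boundary $C^{2,\alpha}$ estimates along $\partial\Omega$ built into the proof of Theorem \ref{JS}, this yields $C^{2,\alpha}$ bounds on compact subsets of $\overline{\Omega}$ uniform in $R$. Arzel\`a--Ascoli and diagonalization then extract a subsequence $u_{R_j}\to u$ in $C^2_{\operatorname{loc}}(\overline{\Omega})$, and the limit $u\in C^{2,\alpha}(\overline{\Omega})$ solves $\mathcal{M}[u]=0$ in $\Omega$ with $u|_{\partial\Omega}=\varphi$.

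The hard part is to show that $u$ extends continuously to $\partial_\infty M$ with $u|_{\partial_\infty M}=\psi$, and it is here that both the SC condition and the bound $K_M\leq -k^2$ come into play, following the ideas of \cite{RTe}. Given $x_0\in\partial_\infty M$ and $\varepsilon>0$, continuity of $\psi$ together with the SC condition produce a $C^2$ open set $\Omega^*\subset\overline{M}$ with $x_0\in\operatorname{Int}(\partial_\infty\Omega^*)\subset\{y\in\partial_\infty M:|\psi(y)-\psi(x_0)|<\varepsilon\}$ and $M\setminus\Omega^*$ convex. Convexity of $M\setminus\Omega^*$ makes the signed distance $\rho^*$ from that set smooth and convex on $\Omega^*$, and the curvature bound $K_M\leq -k^2$ delivers the exponential estimates on $\operatorname{Hess}\rho^*$ needed to control $\mathcal{M}$ on functions depending only on $\rho^*$. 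One then constructs global upper and lower barriers of the form $w^{\pm}=\psi(x_0)\pm\varepsilon\pm C\,\Phi(\rho^*)$, with $\Phi$ a decreasing function chosen so that $\mathcal{M}[w^{+}]\leq 0\leq\mathcal{M}[w^{-}]$ and so that $w^{-}\leq u_R\leq w^{+}$ on $\partial(\Omega_R\cap\Omega^*)$. Comparison on $\Omega_R\cap\Omega^*$, followed by $R\to\infty$, then $y\to x_0$, and finally $\varepsilon\to 0$, yields $u(y)\to\psi(x_0)$. Uniqueness of $u$ follows from the same comparison principle applied to the difference of two candidate solutions. The main technical obstacle is the construction of these asymptotic barriers, which rests on the delicate interplay between the SC geometry and the pinched negative curvature bound.
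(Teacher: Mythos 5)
Your proposal is correct and follows essentially the same route as the paper: exhaustion by $\Omega_R=\Omega\cap B_R$, convexity (hence mean convexity) of the spherical part of the boundary via the Hessian comparison theorem so that Theorem \ref{JS} applies, compactness via the maximum principle plus interior gradient and Schauder estimates, and control at $\partial_\infty M$ through barriers built from the SC condition and $K_M\leq-k^2$ (which the paper obtains by citing Theorems 4 and 10 of \cite{RTe} rather than constructing them explicitly as you sketch). The only superfluous step is the corner smoothing: since $M\setminus\Omega\subset B_R(p)$, the two components $\partial\Omega$ and $\partial B_R(p)$ of $\partial\Omega_R$ are disjoint and no corner arises.
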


We mention that under the hypothesis $K_{M}\leq-k^{2}<0$, any $2-$dimensional
Hadamard manifold satisfies the SC convexity condition, since any two distinct
points of $\partial_{\infty}M$ can be connected by a minimizing geodesic. It
is also proved in \cite{RTe} that this condition is also satisfied if the
metric of $M$ is rotationally symmetric or if the sectional curvature of $M$
has at most exponential decay, precisely, if $\inf_{B_{R}}K_{M}\geq
-Ce^{(k-\varepsilon)R}$ for $R\geq R_{0}$ and for some $\varepsilon>0.$ In
\cite{HR} it is proved that if the SC condition is not required then there are
examples of $3-$dimensional complete manifolds with $K_{M}\leq-k^{2}<0$ in
which only the constant functions are bounded solutions of the minimal PDE\ on
$M$ that extends continuously to $\partial_{\infty}M.$ In such manifolds, if
$u\in C^{\infty}\left(  M\right)  \cap C^{0}\left(  \overline{M}\right)  $ is
solution of an exterior Dirichlet problem for the minimal hypersurface
equation then $u|_{\partial_{\infty}M}$ is constant.

\section{An extension of a Theorem of Jenkins and Serrin}

\qquad We begin with some preliminary facts.

\subsection{Normal coordinates for the inner halftube of $\partial\Omega$}

Let $\varphi\in C^{2}\left(  \partial\Omega\right)  $ be given. Let $d$ be the
Riemannian distance in $M$ and set $\rho(z):=d(z,\partial\Omega)$, $z\in
\Omega$. For $\rho_{0}$ sufficiently small, the normal exponential map
\[
\exp_{\partial\Omega}:\partial\Omega\times\lbrack0,\rho_{0})\longrightarrow
U_{\rho_{0}}=\left\{  z\in\Omega;\rho(z)<\rho_{0}\right\}  \subset M,
\]
is a diffeomorphism.\newline Let $\left\{  T_{1}(x),...,T_{n-1}(x)\right\}
_{x\in V_{r_{0}}}$ be the orthonormal frame defined on an neighborhood
$V_{r_{0}}\subset\partial\Omega$ of a point $p\in\partial\Omega$ by parallel
transport of a given orthonormal frame at $p$ (by construction $\nabla_{T_{i}%
}T_{j}(p)=0,i,j=1,\cdots,n-1$). \newline For each $i\in\left\{
1,...,n-1\right\}  $, extend also $T_{i}|_{\partial\Omega}$ by parallel
transport along the normal geodesic $t\rightarrow\exp_{\partial\Omega}t\eta$,
where $\eta$ is the inward unitary normal field to $\partial\Omega$ and
$t\leq\rho_{0}$, and denote this extension again by $T_{i}$. Then, for each
$(x,t)\in V_{r_{0}}\times\left[  0,\rho_{0}\right]  $, $\left\{
T_{1}(x,t),...,T_{n-1}(x,t)\right\}  $ is a orthonormal frame on the
equidistant hypersurface $\rho\left(  z\right)  =t$. We complete the
orthonormal frame on $U_{\rho_{0}}$ by setting $T_{n}(x,t)=\nabla\rho(x,t)$
for all $(x,t)\in V_{r_{0}}\times\lbrack0,\rho_{0})$. We extend also $\varphi$
to $U_{\rho_{0}}$ by setting $\varphi(\exp_{\partial\Omega}(x,t))=\varphi(x),$
$(x,t)\in\partial\Omega\times\lbrack0,\rho_{0}).$\newline By construction
$\nabla_{T_{n}}T_{i}(x,t)=0$ where $(x,t)\in V_{r_{0}}\times\lbrack0,\rho
_{0})$, $i\in\left\{  1,...,n-1\right\}  $ and $T_{n}(\varphi)=\nabla_{T_{n}%
}T_{n}=0$. Define
\begin{equation}
\omega(z)=\varphi(z)+\psi(\rho(z)) , \label{omega2}%
\end{equation}
where $z\in U_{\rho_{0}}$ and \emph{\ }$\psi\in C^{2}\left(  \left[
0,\infty\right)  \right)  $ is to be determined later. Let $\mathcal{M}$
denote, as above, the minimal hypersurface equation operator. We have%

\begin{equation}
W^{3}\mathcal{M}\left(  \omega\right)  =-\frac{1}{2}\nabla_{\nabla\omega
}(|\nabla\omega)|^{2})+W^{2}\operatorname{div}(\nabla\omega) \label{mse}%
\end{equation}
with $W^{2}:=\left(  1+\left\vert \nabla\omega\right\vert ^{2}\right)  $. Then
$\mathcal{M}\left(  \omega\right)  \leq0$ if only if%
\begin{equation}
-\sum\limits_{i=1}^{n}\left\langle \nabla_{T_{i}}\nabla\omega,\nabla
\omega\right\rangle T_{i}\left(  \omega\right)  +W^{2}\sum\limits_{i=1}%
^{n}\left\langle \nabla_{T_{i}}\nabla\omega,T_{i}\right\rangle \leq0.
\label{a0}%
\end{equation}
\newpage

\begin{lemma}
\label{ES} The following equalities hold for $\omega$ :
\[
\left\{
\begin{array}
[c]{ll}%
\left\langle \nabla_{T_{n}}\nabla\omega,\nabla\omega\right\rangle
=\sum_{i,k=1}^{n-1}II(T_{i},T_{k})T_{i}(\varphi)T_{k}\left(  \varphi\right)
+\psi^{\prime}\psi^{\prime\prime} & \\
\left\langle \nabla_{T_{n}}\nabla\omega,T_{n}\right\rangle =\psi^{\prime
\prime} & \\
\sum\limits_{i=1}^{n-1}\left\langle \nabla_{T_{i}}\nabla\omega,\nabla
\omega\right\rangle T_{i}\left(  \varphi\right)  =\sum\limits_{i,j=1}%
^{n-1}T_{i}\left(  T_{j}\left(  \varphi\right)  \right)  T_{j}\left(
\varphi\right)  T_{i}\left(  \varphi\right)  & \\
\sum\limits_{i=1}^{n-1}\left\langle \nabla_{T_{i}}\nabla\omega,T_{i}%
\right\rangle =\sum\limits_{i=1}^{n-1}T_{i}\left(  T_{i}\left(  \varphi
\right)  \right)  +\sum\limits_{i=1}^{n-1}T_{i}\left(  \varphi\right)
\operatorname{div}_{\Omega(\rho)}T_{i}-\psi^{\prime}(n-1)H. &
\end{array}
\right.
\]

\end{lemma}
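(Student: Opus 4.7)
The plan is to prove all four identities by direct computation in the adapted Fermi frame $\{T_1,\dots,T_n\}$. The starting point is to expand $\nabla\omega$ in this frame using $T_n(\varphi)=0$ and $T_i(\rho)=0$ for $i\le n-1$, which gives
\[
\nabla\omega \;=\; \sum_{i=1}^{n-1} T_i(\varphi)\,T_i \;+\; \psi'(\rho)\,T_n .
\]
Each of the four equalities then follows by applying the Levi-Civita connection to this expression in the appropriate direction and taking inner products.

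For the first two, I would differentiate along $T_n$. Because the frame is parallel-transported along the normal geodesics, $\nabla_{T_n}T_i=0$ for every $i\le n-1$, and $\nabla_{T_n}T_n=0$ since the integral curves of $T_n=\nabla\rho$ are unit-speed geodesics; combined with $T_n(\psi'(\rho))=\psi''$ this leaves $\nabla_{T_n}\nabla\omega=\sum_{i=1}^{n-1} T_n(T_i(\varphi))\,T_i + \psi''\,T_n$. The bracket identity $T_n(T_i(\varphi))=[T_n,T_i](\varphi)=-(\nabla_{T_i}T_n)(\varphi)$ (using $T_n(\varphi)=0$ and $\nabla_{T_n}T_i=0$), together with the Weingarten-type relation $\langle\nabla_{T_i}T_n,T_k\rangle=-II(T_i,T_k)$ for $k\le n-1$ and $\langle\nabla_{T_i}T_n,T_n\rangle=0$, rewrites this as $\sum_{i,k} II(T_i,T_k)T_k(\varphi)\,T_i + \psi''T_n$. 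Taking inner products with $\nabla\omega$ and with $T_n$ yields the first two equalities at once.

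For the third, I would expand $\nabla_{T_i}\nabla\omega$ by the Leibniz rule without discarding the $\nabla_{T_i}T_j$ terms (the frame is not parallel in the $T_i$ direction), noting that $T_i(\psi'(\rho))=\psi''(\rho)T_i(\rho)=0$. Pairing with $\nabla\omega$ produces a diagonal block, a cross term $\sum_{j,k}T_j(\varphi)T_k(\varphi)\langle\nabla_{T_i}T_j,T_k\rangle$, and two $\psi'$ contributions. Differentiating $\langle T_j,T_k\rangle=\delta_{jk}$ shows that $\langle\nabla_{T_i}T_j,T_k\rangle$ is antisymmetric in $(j,k)$, so the cross term vanishes against the symmetric factor $T_j(\varphi)T_k(\varphi)$; the same metric-compatibility identity forces the two $\psi'$ contributions to cancel. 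What survives is $\sum_k T_i(T_k(\varphi))T_k(\varphi)$, and multiplying by $T_i(\varphi)$ and summing over $i$ produces the third identity.

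For the fourth, I would compute $\langle\nabla_{T_i}\nabla\omega,T_i\rangle$ and sum over $i\le n-1$. The diagonal block contributes $\sum_i T_i(T_i(\varphi))$; the block $\sum_{i,j}T_j(\varphi)\langle\nabla_{T_i}T_j,T_i\rangle$ regroups as $\sum_j T_j(\varphi)\,\operatorname{div}_{\Omega(\rho)}T_j$ once one recognizes $\sum_i\langle\nabla_{T_i}T_j,T_i\rangle$ as the intrinsic divergence of $T_j$ on the equidistant hypersurface $\{\rho=\mathrm{const}\}$; and the last block $\psi'\sum_i\langle\nabla_{T_i}T_n,T_i\rangle$ evaluates to $-\psi'(n-1)H$ via the trace identity $\sum_i\langle\nabla_{T_i}T_n,T_i\rangle=-\sum_i II(T_i,T_i)=-(n-1)H$. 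The main obstacle is not conceptual but notational: one must keep careful track of the sign convention linking $II$, $\nabla_{T_i}T_n$, and the inward mean curvature $H$, and organize the four contributions to $\nabla_{T_i}\nabla\omega$ cleanly so that the antisymmetry cancellation underlying the third identity is visible.
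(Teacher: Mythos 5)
Your proposal is correct and follows essentially the same route as the paper: expand $\nabla\omega=\sum_{i}T_i(\varphi)T_i+\psi'T_n$ in the Fermi frame, use parallelism along the normal geodesics and the Weingarten relation for the first two identities, and trace over the frame for the fourth. The only (immaterial) difference is in the third identity, where the paper identifies the left side with $\tfrac12(\nabla\varphi)^T(|\nabla\omega|^2)$ and you instead expand $\nabla_{T_i}\nabla\omega$ by Leibniz and cancel the connection terms by the antisymmetry of $\langle\nabla_{T_i}T_j,T_k\rangle$ in $(j,k)$; both computations are valid.
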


\begin{proof}
We will use throughout the proof that $\nabla_{T_{n}}T_{i}=0,$ $i=1,\cdots,n$.
To prove equality one and two we need to compute $\nabla_{T_{n}}\nabla\omega$.
Since
\begin{equation}
\nabla\omega=\sum\limits_{i=1}^{n-1}T_{i}\left(  \varphi\right)  T_{i}%
+\psi^{\prime}T_{n} \label{eeg}%
\end{equation}
then%
\begin{equation}
\nabla_{T_{n}}\nabla\omega=\sum\limits_{i=1}^{n-1}T_{n}(T_{i}\left(
\varphi\right)  )T_{i}+T_{n}(\psi^{\prime})T_{n}. \label{bc}%
\end{equation}
We obtain for the first terms of (\ref{bc})%
\begin{align}
T_{n}\left(  T_{i}(\varphi)\right)   &  =[T_{n},T_{i}]\varphi=(\nabla_{T_{n}%
}T_{i}-\nabla_{T_{i}}T_{n})\varphi=-\nabla_{T_{i}}T_{n}\left(  \varphi\right)
\nonumber\\
&  =-\sum\limits_{k=1}^{n-1}\left\langle \nabla_{T_{i}}T_{n},T_{k}%
\right\rangle T_{k}\left(  \varphi\right)  =\sum_{k}II(T_{i},T_{k})T_{k}(\phi)
\label{ps1}%
\end{align}
\newline Moreover%
\begin{equation}
T_{n}\left(  \psi^{\prime}\left(  \rho\right)  \right)  T_{n}=\psi
^{\prime\prime}\left(  \rho\right)  T_{n}\left(  \rho\right)  T_{n}%
=\psi^{\prime\prime}T_{n} \label{ps2}%
\end{equation}
\newline\textbf{Proof of $i)$ and $ii)$:} From (\ref{eeg}), (\ref{bc}),
(\ref{ps1}) and (\ref{ps2}) we obtain\textbf{ }%
\[
\left\langle \nabla_{T_{n}}(\nabla\omega),\nabla\omega\right\rangle
=\sum_{i=1}^{n-1}T_{i}(\varphi)T_{n}(T_{i}(\varphi))+\psi^{\prime}T_{n}%
(\psi^{\prime})=\sum_{i,k=1}^{n-1}II(T_{i},T_{k})T_{i}(\varphi)T_{k}\left(
\varphi\right)  +\psi^{\prime}\psi^{\prime\prime}%
\]
and $\left\langle \nabla_{Tn}\nabla\omega,T_{n}\right\rangle =\psi
^{\prime\prime}$.\newline\textbf{Proof of $iii)$}: Note that $\nabla
\varphi^{T}$ - the projection of $\nabla\varphi$ on hypersurfaces parallel to
$\partial\Omega$ - is $\nabla\varphi$ since $\varphi$ is independent of $\rho
$. Furthermore $\nabla_{T_{i}}\psi=0,i=1,\cdots n-1$; hence we have
\[
\frac{1}{2}(\nabla\varphi)^{T}(|\nabla\omega|^{2})=\sum_{i=1}^{n-1}%
T_{i}(\varphi)\nabla_{\nabla\varphi^{T}}(T_{i}(\varphi))=\sum\limits_{i,j=1}%
^{n-1}T_{i}\left(  T_{j}\left(  \varphi\right)  \right)  T_{j}\left(
\varphi\right)  T_{i}\left(  \varphi\right)  .
\]
\textbf{Proof of $iv)$}: Using (\ref{eeg}) we have
\[
\sum_{i=1}^{n-1}\left\langle \nabla_{T_{i}}\nabla\omega,T_{i}\right\rangle
=\sum_{i=1}^{n-1}T_{i}(T_{i}(\varphi))-(n-1)\psi^{\prime}H+\sum_{i=1}%
^{n-1}T_{i}\left(  \varphi\right)  \operatorname{div}_{\Omega(\rho)}(T_{i}).
\]

\end{proof}

\subsubsection{Barriers for the Dirichlet problem on $\Omega$ for the minimal
surface equation}

\begin{lemma}
\label{Bar} Let $H$ and $A$ respectively the mean curvature and the shape
operator of $\partial\Omega$ w.r.t. to the inner orientation and set $H_{\inf
}:=\inf_{\partial\Omega}H$. Let $R$ be an upperbound of the Ricci curvature of
$M$. The function\newline$\omega\left(  z\right)  :=\varphi\left(  z\right)
+a\ln\left(  1+bt\right)  $ is superharmonic w.r.t. $\mathcal{M}$ on
$U_{\varepsilon}$, where:\newline i) if $H_{\inf}<0$, $a=b^{-1}$,
$0<\varepsilon<\min\{\frac{1}{2b},\rho_{0}\}$ being the constant $b$ given by
\begin{equation}%
\begin{array}
[c]{c}%
b/3=||D\varphi||_{\partial\Omega}^{2}\left(  ||D^{2}\varphi||_{\partial\Omega
}+||A||_{\partial\Omega}\right)  \\
+(2+||D\varphi||_{\partial\Omega}^{2})(||D^{2}\varphi||_{\partial\Omega
}+(n-1)^{2}\rho_{0}\left\vert \left\vert D\varphi\right\vert \right\vert
_{\partial\Omega}R-(n-1)H_{\inf});
\end{array}
\label{ob}%
\end{equation}
\newline ii) if $H_{\inf}\geq0$, $b>a^{-1}$, $0<\varepsilon<\min
\{a-b^{-1},\rho_{0}\}$, being the constant $a$ given by%
\[%
\begin{array}
[c]{c}%
a^{-1}=||D\varphi||_{\partial\Omega}^{2}\left(  ||D^{2}\varphi||_{\partial
\Omega}+||A||_{\partial\Omega}\right)  \\
+(2+||D\varphi||_{\partial\Omega}^{2})(||D^{2}\varphi||_{\partial\Omega
}+(n-1)^{2}\rho_{0}\left\vert \left\vert D\varphi\right\vert \right\vert
_{\partial\Omega}R).
\end{array}
\]

\end{lemma}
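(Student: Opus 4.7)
The plan is to substitute the four identities of Lemma \ref{ES} into the superharmonicity criterion (\ref{a0}) with $\psi(t)=a\ln(1+bt)$, and then show that the $\psi''$-driven contribution uniformly dominates every other term on $U_\varepsilon$, provided the parameters are chosen as in (i) and (ii). With this $\psi$ one computes $\psi'(t)=ab/(1+bt)>0$ and, crucially, $\psi''(t)=-\psi'(t)^{2}/a<0$. Inserting the four identities of Lemma \ref{ES} into (\ref{a0}) and separating the $i=n$ term from the tangential sum, the two $\psi''$-contributions combine through
\[
W^{2}\psi'' - \psi'^{2}\psi'' \;=\; (1+|\nabla\varphi|^{2})\,\psi'',
\]
which is the only large negative quantity available and will be used to absorb everything else.

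The remaining tangential pieces are bounded in absolute value by $\psi'\|A\|_{\partial\Omega}\|D\varphi\|_{\partial\Omega}^{2}$ (from the $II$-term), $\|D\varphi\|_{\partial\Omega}^{2}\|D^{2}\varphi\|_{\partial\Omega}$ (from the tangential Hessian product), and $W^{2}\|D^{2}\varphi\|_{\partial\Omega}$ (from the tangential Hessian trace). The delicate terms are $W^{2}\sum T_i(\varphi)\operatorname{div}_{\Omega(\rho)}T_i$ and the mean-curvature piece $-(n-1)W^{2}\psi' H$, where $H$ is now the mean curvature of the level set $\Omega(\rho)$. For the former I use a Riccati-type computation: since $\{T_i\}_{i<n}$ is $T_n$-parallel, differentiating $\langle\nabla_{T_j}T_i,T_j\rangle$ along $T_n$ and applying the curvature identity $\nabla_{T_n}\nabla_{T_j}T_i-\nabla_{T_j}\nabla_{T_n}T_i=R(T_n,T_j)T_i+\nabla_{[T_n,T_j]}T_i$, together with the Ricci upper bound $R$ and the initial condition $\nabla_{T_j}T_i(p,0)=0$, yields $|\operatorname{div}_{\Omega(\rho)}T_i|\le (n-1)\rho_0 R$ on $U_{\rho_0}$; this produces the summand $(n-1)^{2}\rho_{0}\|D\varphi\|_{\partial\Omega}R$ in (\ref{ob}). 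The same Riccati comparison bounds the drift of $H$ away from $H_{\inf}$ by an $O(\rho_0 R)$ correction already accounted for above.

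The two cases now diverge on the sign of $H_{\inf}$. In case (i), $H_{\inf}<0$ makes $-(n-1)W^{2}\psi' H$ positive and it must be absorbed; setting $a=1/b$ yields $\psi''=-b\psi'^{2}$, and restricting to $t<1/(2b)$ keeps $\psi'\ge 2/3$, so the master negative term is at least $\tfrac{4b}{9}(1+|\nabla\varphi|^{2})$. Using $W^{2}\le 2+\|D\varphi\|_{\partial\Omega}^{2}$ in every upper bound and collecting yields precisely (\ref{ob}), with the factor $b/3$ versus $4b/9$ supplying a safety margin. In case (ii), $H\ge 0$ makes the mean-curvature contribution non-positive and it is simply discarded; fixing $a$ by the same inequality with the $-(n-1)H_{\inf}$ summand removed, and taking any $b>1/a$ on $t\le a-1/b$, forces $\psi'\ge 1$ so that $|\psi''|(1+|\nabla\varphi|^{2})\ge (1+|\nabla\varphi|^{2})/a$, which closes the estimate.

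The principal technical obstacle is the Riccati estimate on $\operatorname{div}_{\Omega(\rho)}T_i$ (together with the analogous drift estimate for $H$): the frame $\{T_i\}$ is parallel only in the normal direction, so one must carefully differentiate along $T_n$, bring in the ambient curvature tensor, and convert it to a Ricci-bound estimate linear in $\rho_0$. Everything else is careful sign-tracking and algebra arranged to reproduce the explicit constants in the statement.
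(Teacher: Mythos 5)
Your proposal is correct and follows essentially the same route as the paper: the same cancellation $W^{2}\psi''-\psi'^{2}\psi''=(1+|\nabla\varphi|^{2})\psi''$, the same logarithmic barrier with $ab=1$ (resp.\ $a$ fixed and $b>a^{-1}$), the same evolution equation along the normal geodesic to bound $\operatorname{div}_{\Omega(\rho)}T_{j}$ by $(n-1)\rho_{0}R$, and the same sup-norm bookkeeping leading to the constants in (\ref{ob}). The only cosmetic difference is that you bound $\psi'$ directly on $t<1/(2b)$ (getting the $4b/9$ versus $b/3$ margin) instead of expanding in powers of $t$ as the paper does, which lands in the same place.
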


\begin{proof}
Let us introduce the following notations for terms containing only $\varphi$
and its derivatives:
\[
\alpha:=\sum\limits_{i=1}^{n-1}\left[  T_{i}\left(  \varphi\right)  \right]
^{2},\beta=\sum\limits_{i=1}^{n-1}T_{i}\left(  T_{i}\left(  \varphi\right)
\right)  ,\ \mu:=\sum\limits_{j=1}^{n-1}T_{j}\left(  \varphi\right)
\operatorname{div}T_{j}%
\]%
\[
\lambda:=\sum\limits_{i,j=1}^{n-1}T_{i}\left(  T_{j}\left(  \varphi\right)
\right)  T_{j}\left(  \varphi\right)  T_{i}\left(  \varphi\right)
,\ \theta:=\sum_{i,k=1}^{n-1}II(T_{i},T_{k})T_{i}(\varphi)T_{k}\left(
\varphi\right)  .
\]
From Lemma \ref{ES}, we plug in inequality \eqref{a0} the preceeding terms and
obtain a differential inequality for $\psi$:
\[
\label{inegalite1}-\lambda+\left(  1+\alpha+\left[  \psi^{\prime}\right]
^{2}\right)  \left(  \sigma-\psi^{\prime}(n-1)H\right)  \newline-\left(
\theta+\psi^{\prime}\psi^{\prime\prime}\right)  \psi^{\prime}+\newline\left(
1+\alpha+\left[  \psi^{\prime}\right]  ^{2}\right)  \psi^{\prime\prime}\leq0.
\]
where $\sigma:=\beta+\mu$.\newline Set
\begin{equation}
H_{\inf}:=\inf_{\partial\Omega}H|_{\partial\Omega}\label{hinf}%
\end{equation}
and define $\psi\left(  t\right)  =a\ln\left(  1+bt\right)  \text{,}$ where
$a>0$ and $b>0$ are constant to be determined. Since $\psi^{\prime}>0$, we can
replace the function $H$ in the inequality \eqref{inegalite1} by $H_{\inf}$ :
\[%
\begin{array}
[c]{cc}%
-\lambda+\left(  1+\alpha\right)  \sigma+\sigma\left[  \psi^{\prime}\right]
^{2}-\theta\psi^{\prime}+\left(  1+\alpha\right)  \psi^{\prime\prime} & +\\
-\left(  n-1\right)  H_{\inf}\left[  \psi^{\prime}\right]  ^{3}-\left(
n-1\right)  \left(  1+\alpha\right)  H_{\inf}\psi^{\prime} & \leq0.
\end{array}
\]
\indent Setting $\delta:=-\lambda+\sigma\left(  1+\alpha\right)  \text{,
}\ c:=\left(  n-1\right)  \left(  1+\alpha\right)  >0$, then last inequality
becomes%
\begin{equation}%
\begin{array}
[c]{cc}%
\delta+\sigma\left[  \psi^{\prime}\right]  ^{2}-\theta\psi^{\prime}+\left(
1+\alpha\right)  \psi^{\prime\prime} & +\\
-\left(  n-1\right)  H_{\inf}\left[  \psi^{\prime}\right]  ^{3}-cH_{\inf}%
\psi^{\prime} & \leq0.
\end{array}
\label{Ar1}%
\end{equation}
\indent We first suppose $H_{\inf}<0$. Set $ab=1$; then replacing
$\psi^{\prime}=\left(  1+bt\right)  ^{-1}$ and $\psi^{\prime\prime}=-\left[
\psi^{\prime}\right]  ^{2}b$,
\[%
\begin{array}
[c]{cc}%
\delta\left(  1+bt\right)  ^{3}+\sigma\left(  1+bt\right)  -\theta\left(
1+bt\right)  ^{2}-\left(  1+\alpha\right)  b\left(  1+bt\right)   & \\
-\left(  n-1\right)  H_{\inf}-cH_{\inf}\left(  1+bt\right)  ^{2}%
\leq0.\label{uae} &
\end{array}
\]
Taking absolute values, dividing by $1+bt$ and expanding w.r.t. $t$,
(\ref{uae}) is true if%
\begin{equation}%
\begin{array}
[c]{cc}%
|\delta|b^{2}t^{2}+\left(  2\left\vert \delta\right\vert +\left\vert
\theta\right\vert -cH_{\inf}\right)  bt+\left\vert \delta\right\vert
+\left\vert \theta\right\vert +\left\vert \sigma\right\vert -b\left(
\alpha+1\right)   & +\\
-(c+n-1)H_{\inf} & \leq0
\end{array}
\label{bom}%
\end{equation}
It is clear that a sufficient condition for inequality (\ref{bom}) is that :
\begin{equation}
\left\{
\begin{array}
[c]{l}%
t\leq\frac{1}{\sqrt{3\left\vert \delta\right\vert b}}\\
t\leq\frac{1}{3\left[  2\left\vert \delta\right\vert +\left\vert
\theta\right\vert -cH_{\inf}\right]  }\\
b/3\geq\left\vert \delta\right\vert +\left\vert \sigma\right\vert +\left\vert
\theta\right\vert -\left(  c+n-1\right)  H_{\inf}%
\end{array}
\right.  .\label{si}%
\end{equation}
Notice that these inequalities are \textit{a fortiori} satisfied if we replace
in these expressions the functions $\alpha$, $\left\vert \beta\right\vert $,
$\left\vert \mu\right\vert $, $\left\vert \lambda\right\vert $ and $\left\vert
\theta\right\vert $ by their supremum on $\partial\Omega$. We obtain
\[
\alpha:=||D\varphi||^{2},\beta=||D^{2}\varphi||,\ \lambda:=||D^{2}%
\varphi||||D\varphi||^{2},\ \theta:=||A||||D\varphi||^{2}.
\]
To estimate $\mu$ we need to bound $\operatorname{div}(T_{j})$. We derivate
the equation $\nabla_{T_{n}}T_{j}=0$ with respect to $T_{i},i=1,\cdots,n-1$.
We obtain an evolution equation for $\operatorname{div}T_{j}$ along the normal
geodesic : $\nabla_{T_{n}}\operatorname{div}(T_{j})=(n-1)\operatorname*{Ric}%
(T_{n},T_{j})$, with the initial condition $\operatorname{div}(T_{j}(p)=0$.
This yields
\[
\mu:=(n-1)^{2}||D\varphi||\rho_{0}\sup_{y\in V_{r_{0}}\times\lbrack0,\rho
_{0}]}\operatorname*{Ric}(y).
\]
\newline Let us fix $b$ such that third inequality in (\ref{si}) is an
equality. We then obtain expression of $b$ in Lemma \ref{Bar}. Replacing $b$
by its expression (\ref{ob}) the first two inequalities of \eqref{si} hold if
\[
t\leq t_{0}:=\min\left(  {\frac{1}{2b},\rho_{0}}\right)  .
\]
This conclude the proof of i).

Now, suppose $H_{\inf}\geq0$. In this case, inequality (\ref{Ar1}) is
satisfied if\newline $\delta+\sigma\left(  \psi^{\prime}\right)  ^{2}%
-\theta\psi^{\prime}+\left(  1+\alpha\right)  \psi^{\prime\prime}\leq0$ which,
after replacing $\psi^{\prime}=ab\left(  1+bt\right)  ^{-1}$ and $\psi
^{\prime\prime}=-\left[  \psi^{\prime}\right]  ^{2}a^{-1}$, become%
\begin{equation}
\delta\left(  1+bt\right)  ^{2}+\sigma a^{2}b^{2}-\theta ab\left(
1+bt\right)  -\left(  1+\alpha\right)  ab^{2}\leq0.\label{e0}%
\end{equation}
Since $\alpha\geq0$, (\ref{e0}) is satisfied if%
\begin{equation}
\left\vert \delta\right\vert \left(  1+bt\right)  ^{2}+\left\vert
\sigma\right\vert a^{2}b^{2}+\left\vert \theta\right\vert ab\left(
1+bt\right)  \leq ab^{2}.\label{e1}%
\end{equation}
Notice that for $1+bt\leq ab$, equation (\ref{e1}) is true if we replace
$1+bt$ by $ab$, obtaining
\[
a\left(  \left\vert \delta\right\vert +\left\vert \sigma\right\vert
+\left\vert \theta\right\vert \right)  \leq1.
\]
Fix $a=1/\left(  \left\vert \delta\right\vert +\left\vert \sigma\right\vert
+\left\vert \theta\right\vert \right)  $ where we already assume for
$\left\vert \delta\right\vert ,\left\vert \sigma\right\vert $ and $\left\vert
\theta\right\vert $ their supremum on $\partial\Omega$. Then, for all $b>1/a$
and $t\leq\min\left\{  a-1/b,\rho_{0}\right\}  $ we have (\ref{e1}) satisfied
and this conclude the proof of ii).
\end{proof}

\subsection{Proof of Theorem \ref{JS}.}

\qquad Let
\begin{equation}
\mathcal{C}=\frac{1}{b}\ln(1+b\varepsilon), \label{ourC}%
\end{equation}
where $b$ and $\varepsilon$ are defined in i) of Lemma \ref{Bar}, if $H_{\inf
}<0$ ($\mathcal{C=+\infty}$ if $H_{\inf}\geq0$ - according with ii) of Lemma
\ref{Bar}). For the first part where $\varphi\in C^{2,\alpha}\left(
\partial\Omega\right)  $ one uses the continuity method by setting
\[
V=\left\{  t\in\left[  0,1\right]  \text{
%TCIMACRO{\TEXTsymbol{\vert} }%
%BeginExpansion
$\vert$
%EndExpansion
}\exists u_{t}\in C^{2,\alpha}\left(  \overline{\Omega}\right)  \text{ such
that }\mathcal{M}\left[  u_{t}\right]  =0\text{, }u_{t}|_{\partial\Omega
}=t\varphi\right\}  .
\]
Clearly $V\neq\varnothing$ since $t=0\in V.$ Moreover, $V$ is open by the
implicit function theorem.

Let $t_{n}\in V$ be a sequence converging to $t\in\left[  0,1\right]  $ and
$u_{n}:=u_{t_{n}}\in C^{2,\alpha}\left(  \overline{\Omega}\right)  $ be the
solutions such that $u_{n}|_{\partial\Omega}=t_{n}\varphi.$ By the maximum
principle the sequence $u_{n}$ has uniformly bounded $C^{0}$ norm. Moreover,
$\varphi-\psi|_{\partial\Omega}\leq u_{n}|_{\partial\Omega}\leq\omega
|_{\partial\Omega}$ where the functions $\varphi$ and $\psi$ are defined in
equation \eqref{omega2} and Lemma \ref{Bar}. It follows that%

\[
\max_{\partial\Omega}\left\vert \operatorname{grad}u_{n}\right\vert \leq
\max\left\{  \max_{\partial\Omega}\left\vert \operatorname{grad}%
\sigma\right\vert ,\max_{\partial\Omega}\left\vert \operatorname{grad}%
\omega\right\vert \right\}  <\infty.
\]
By Section 5 of \cite{DHL} there is $C>0$ such that $\max_{\Omega}\left\vert
\operatorname{grad}u_{n}\right\vert \leq C$ so that $\left\vert u_{n}%
\right\vert _{1}\leq D<\infty$ with $D$ not depending on $n.$ H\"{o}lder
estimates and PDE\ linear elliptic theory (\cite{GT}) guarantees that $u_{n}$
is equicontinous in the $C^{2,\beta}$ norm for some $\beta>0$ and hence
contains a subsequence converging uniformly on the $C^{2}$ norm to a solution
$u\in C^{2}\left(  \overline{\Omega}\right)  .$ Regularity theory of linear
elliptic PDE (\cite{GT}) implies that $u\in C^{2,\alpha}\left(  \overline
{\Omega}\right)  .$ This proves the first part of the theorem.

Assume now that $\Omega$ is mean convex and let $\varphi\in C^{0}\left(
\partial\Omega\right)  $ be given. Let $\varphi_{n}^{\pm}\in C^{2,\alpha
}\left(  \partial\Omega\right)  $ be a monotonic sequence of functions
converging from above and from below to $\varphi$ in the $C^{0}$ norm. It
follows by what we have proved above the existence of solutions $u_{n}^{\pm
}\in C^{2,\alpha}\left(  \overline{\Omega}\right)  $ of $\mathcal{M}=0$ in
$\Omega$ such that $u_{n}^{\pm}|_{\partial\Omega}=\psi_{n}^{\pm}.$ The
sequence $u_{n}^{\pm}$ is uniformly bounded in the $C^{0}$ norm by the maximum
principle. Therefore, by Theorem 1.1 of \cite{S} and linear elliptic PDE
theory the sequence $u_{n}^{\pm}$ contains a subsequence $v_{n}\in
C^{2,\alpha}\left(  \overline{\Omega}\right)  $ converging uniformly on the
$C^{2}$ norm on compacts subsets of $\Omega$ to a solution $u\in C^{2}\left(
\Omega\right)  $ of $\mathcal{M}=0$. Since
\[
\varphi_{n}^{-}\leq u_{n}^{-}\leq u_{n}^{+}\leq\varphi_{n}^{+}%
\]
and $\varphi_{n}^{\pm}$ converges to $\varphi$ it follows by the maximum
principle that $u$ extends continously to $\overline{\Omega}$ and
$u|_{\partial\Omega}=\varphi.$ This concludes the proof of Theorem \ref{JS}.

\section{The exterior Dirichlet problem for the \newline minimal hypersurface
PDE with prescribed asymptotic boundary}

\qquad For the proof of Theorem \ref{SC} we shall make use of the following
definition given in \cite{RTe}. We first recall that a function $\Sigma\in
C^{0}\left(  M\right)  $ is a supersolution for $\mathcal{M}$ if, given a
bounded domain $U\subset M$ and if $u\in C^{0}\left(  \overline{U}\right)  $
is a solution of $\mathcal{M}=0$ in $U$, then $u|_{\partial U}\leq
\Sigma|_{\partial U}$ implies that $u\leq\Sigma|_{U}$.

Given $x\in\partial_{\infty}M$ and an open subset $\Omega\subset M$ such that
$x\in\partial_{\infty}\Omega$, an \emph{upper barrier for $\mathcal{M}$
relative to $x$ and $\Omega$ with height $C$} is a function $\Sigma\in
C^{0}(M)$ such that

\begin{description}
\item \textrm{(i)} $\Sigma$ is a supersolution for $\mathcal{M}$;

\item \textrm{(ii)}$\Sigma\geq0$ and $\lim_{p\in M,\,p\rightarrow x}%
\Sigma(p)=0$, w.r.t. the cone topology and according to \cite{Ch};

\item \textrm{(iii)} $\Sigma_{M\setminus\Omega} \ge C$.
\end{description}

Similarly, we define subsolutions and lower barriers.

We say that $M$ is \emph{regular at infinity with respect to }$\mathcal{M}$
if, given $C>0$, $x\in\partial_{\infty}M$ and an open subset $W\subset
\partial_{\infty}M$ with $x\in W$, there exist an open set $\Omega\subset M$
such that $x\in\operatorname*{Int}\partial_{\infty}\Omega\subset W$ and upper
and lower barriers $\Sigma,\sigma:M\rightarrow\mathbb{R}$ relatives to $x$ and
$\Omega$, with height $C$.

\subsection{Proof of Theorem \ref{SC}}

\qquad Consider a continuous extension $\Psi$ of $\psi\in C^{0}\left(
\partial_{\infty}M\right)  $ which is $C^{\infty}$ in $M.$ That is, $\Psi\in
C^{\infty}\left(  M\right)  \cap C^{0}\left(  \overline{M}\right)  ,$
$M:=M\cup\partial_{\infty}M$ and $\Psi|_{\partial_{\infty}M}=\psi.$ Let $o\in
M$ be a fixed point in $M$ and let $N>0$ be such that the open geodesic ball
$B_{n}$ centered at $o$ with radius $n$ contains the boundary $\partial\Omega$
of the exterior domain $\Omega$ ($n\in\mathbb{N}$ and $n\geq N$). Set
$\Omega_{n}=\Omega\cap B_{n}.$ It follows from the Hessian comparison theorem
that $\partial\Omega_{n}\backslash\partial\Omega$ is convex, in particular
mean convex. From this fact and from the hypothesis on the oscillation of
$\varphi$, it follows from Theorem \ref{JS} the existence of a solution
$u_{n}\in C^{2,\alpha}\left(  \overline{\Omega}_{n}\right)  $ of
$\mathcal{M}=0$ in $\Omega_{n}$ such that $u_{n}|_{\partial\Omega}=\varphi$
and $u_{n}|_{\partial\Omega_{n}\backslash\partial\Omega}=\Psi|_{\partial
\Omega_{n}\backslash\partial\Omega}.$

By the maximum principle $u_{n}$ is uniformly bounded. It follows from Theorem
1.1 of \cite{S} and the diagonal method that $u_{n}$ contains a subsequence
converging uniformly on the $C^{2}$ norm on compact subsets of $\Omega$ to a
solution $u\in C^{\infty}\left(  \Omega\right)  $ of $\mathcal{M}=0.$ By Lemma
\ref{lem} and regularity theory $u\in C^{2,\alpha}\left(  \overline{\Omega
}\right)  $ and $u|_{\partial\Omega}=\varphi.$

Since $M$ satisfies the SC condition- given in the Introduction- it follows
from Theorem 10 of \cite{RTe} that $M$ is regular at infinity with respect to
the minimal hypersurface PDE. From Theorem 4 of \cite{RTe} it follows that $u$
extends continuously to $\partial_{\infty}M$ and satisfies the boundary
condition $u|_{\partial_{\infty}M}=\psi.$ This concludes the proof of Theorem
\ref{SC}.

\bigskip


\begin{thebibliography}{99}                                                                                               %


\bibitem {Ch}H. I. Choi: \textit{Asymptotic Dirichlet problems for harmonic
functions on Riemannian manifolds}. Transaction of the AMS, 281, 2, (1984), 691--716

\bibitem {DHL}M. Dajczer, P. Hinojosa, J.H. de Lira. \textit{Killing graphs
with prescribed mean curvature,} Calc. Var. Partial Differential Equations 33
(2008) 231--248

\bibitem {ER}N. do Espirito-Santo, J. Ripoll: \textit{Some existence results
on the exterior Dirichlet problem for the minimal hypersurface equation, }Ann.
I. H. Poincar\'{e}/An Non Lin, 28 (2011), 385--393

\bibitem {GT}D. Gilbarg and N. S. Trudinger, \textit{Elliptic partial
differential equations of second Order}, Springer-Verlag (1998), Berlin

\bibitem {HR}I. Holopainen, J. Ripoll: \textit{\textquotedblleft Nonsolvabily
of the asymptotic Dirichlet for some quasilinear elliptic PDE on Hadamard
manifolds\textquotedblright}, work in progress

\bibitem {JS}H. Jenkins, J. Serrin: \textit{The Dirichlet problem for the
minimal surface equation in higher dimensions, }J. Reine Angew. Math. 229
(1968), 170--187

\bibitem {Kru}R. Krust: \textit{Remarques sur le probl\`{e}me exterior de
Plateau, }Duke Math. Jr, Vol 59 (1989), 161-173

\bibitem {KT}N. Kutev, F. Tomi: \textit{Existence and Nonexistence for the
exterior Dirichlet problem for the minimal surface equation in the plane,
}Differential and Integral Equ., Vol 11, N. 6 (1998), 917-928

\bibitem {Kuw}E. Kuwert: \textit{On solutions of the exterior Dirichlet
problem for the minimal surface equation, }Ann. I. H. Poincar\'{e}/An Non Lin,
10 (1993), 445--451

\bibitem {N}J. C. C. Nitsche: \textit{Vorlesungen \"{u}ber Minimalfl\"{a}chen,
}Grundlehren der Math. Wiss., Vol 199 (1975), Springer

\bibitem {RT}J. Ripoll, F. Tomi: \textit{On solutions to the exterior
Dirichlet problem for the minimal surface equation with catenoidal ends, }to
appear in the Advances of Calculus of Variations, 2013

\bibitem {RTe}J. Ripoll, M. Telichevesky: \textit{Regularity at infinity of
Hadamard manifolds with respect to some elliptic operators and applications to
asymptotic Dirichlet problems,} to appear in the Transaction of the AMS (2013)

\bibitem {S}J. Spruck,\textquotedblleft\emph{Interior gradient estimates and
existence theorems for constant mean curvature graphs in $M^{n}\times
\mathbb{R}$}\textquotedblright, Pure and Applied Mathematics Quarterly
\textbf{3 (3)}(Special Issue: In honor of Leon Simon, Part 1 of 2): 785--800, 2007
\end{thebibliography}
\end{document}